\newtheorem{theorem}{Theorem}[section]
\begin{document}

\title{Combinatorial Constructions for Sifting Primes and Enumerating the Rationals}

\author{Edinah K. Gnang 
\thanks{Department of Computer Science, Rutgers University, Piscataway, NJ
08854-8019 USA%
} 
\and 
Chetan Tonde 
\thanks{Department of Computer Science, Rutgers University, Piscataway, NJ
08854-8019 USA%
}
\and
\\
\it{In memory of Herbert Wilf}
}
\maketitle
\begin{abstract}
{
We describe a combinatorial approach for investigating  
properties of rational numbers. The overall approach rests on 
structural bijections between rational numbers and familiar 
combinatorial objects, namely rooted trees. We emphasize that
such mappings achieve much more than enumeration 
of rooted trees.\\
We discuss two related structural bijections. 
The first corresponds to a bijective map between 
integers and rooted trees. The first bijection also suggests a new algorithm 
for sifting primes. The second bijection extends the first one in order to map
rational numbers to a family of rooted trees. The second bijection suggests a new 
combinatorial construction for generating reduced rational numbers, 
thereby producing refinements of the output of the Wilf-Calkin[1] Algorithm.}
\end{abstract}

\begin{figure} 
\center
\scalebox{0.5}{
\begin{tikzpicture}[level/.style={sibling distance=30mm/#1}]
\node [circle,draw] (z){$r$}
  child { node [circle,draw] (a) {$v_1$}}
  child {node [circle,draw] (b) {$v_2$}
        child {node [circle,draw] (c) {$v_3$}}
        child {node [circle,draw] (d) {$v_4$}   
                child [grow = down] { node [circle,draw] (e) {$v_5$}}
                }
};
\end{tikzpicture}
}
\caption{Typical example of a rooted tree. Root node: $r$, leaves: $\{v_1,v_3,v_5\}$,\newline 
(parent,child) pair: $(v_2, v_3)$, connected path: $r \rightarrow v_2\rightarrow v_4 \rightarrow v_5$.}
\end{figure}

\begin{figure} 
\center
\scalebox{0.5}{
\begin{tikzpicture}[level/.style={sibling distance=30mm/#1}]
\node [circle,draw] (z){$r$}
  child { node [circle,draw] (a) {$5$}}
  child {node [circle,draw] (b) {$2$}
        child {node [circle,draw] (c) {$3$}}
        child {node [circle,draw] (d) {$7$}     
                child [grow = down] { node [circle,draw] (e) {$2$}}
                }
};
\end{tikzpicture}
}
\caption{A valid label assignment with labels = 2,3,5 and 7.}
\end{figure}

\begin{figure} 
\center
\scalebox{0.5}{
\begin{tikzpicture}[level/.style={sibling distance=30mm/#1}]
\node [circle,draw] (z){$r$}
  child { node [circle,draw] (a) {$2$}}
  child {node [circle,draw] (b) {$3$}
        child {node [circle,draw] (c) {$5$}}
        child {node [circle,draw] (d) {$5$}     
                child [grow = down] { node [circle,draw] (e) {$7$}}
                }
};
\end{tikzpicture}
}
\caption{An invalid label assignement with labels = 2,3,5 and 7.}
\end{figure}

\section{The Combinatorics}
The word "\emph{Combinatorics}" here mostly refers to the combinatorics of \emph{trees}, more specifically labeled \emph{rooted trees} see Fig[1]. The defining property of a tree is that precisely one path connects any two vertices of the tree. 
At various parts of the discussion we will assign labels to the vertices of the trees. \emph{Rooted} trees have each a special vertex (typically labeled $r$)
referred to as the \emph{root} of the tree. Moreover a collection of rooted trees is called a
\emph{planted forest}. For convenience in the subsequent discussion the word "\emph{tree}" will 
refer to a rooted tree while the word "\emph{forest}" will refer to a planted forest.\\
Each vertex $v \ne \, root \,$, is adjacent to a single vertex whose distance to the root is smaller than 
the distance between $v$ and the root. (The distance between two vertices here refers to the 
number of edges on the unique path connecting the two vertices.) Such a vertex is called the 
\emph{parent} of $v$ and all other vertices adjacent to $v$ are called \emph{children} vertices of 
$v$.\\
The analogy to human family relations is extended to include notions such as \emph{siblings} and 
\emph{grandparent} relationships between vertices. We assume that such relationships between 
vertices are unambiguous within the current context of trees. 
Vertices with no \emph{children} are called \emph{leaves} of the tree.\\
Finally, a label assignment to the vertices of a given tree is considered valid, 
if the root is labeled $r$ and no two sibling vertices are assigned the same label. See Fig[2,3].

\subsection{Operations on trees and forests}

\subsubsection{Grafting trees and forests}

The first operation on trees that we introduce here is the \emph{grafting} operation.\\
Let $T_{\alpha}$ and $T_{\beta}$ denote two rooted trees. We say that the 
tree $T_{\gamma}$ results from the grafting of $T_{\alpha}$ onto $T_{\beta}$ denoted 
\begin{equation}
T_{\gamma}=T_{\alpha}\:\wedge\: T_{\beta}
\end{equation}
iff $T_{\alpha}$ and $T_{\beta}$ are disjoint rooted subtrees of $T_{\gamma}$
and most importantly $T_{\gamma}$ has no other rooted subtree disjoint
from the subtrees $T_{\alpha}$ and $T_{\beta}$ as illustrated in Fig[4].\\
Note that, it follows from the definition that the grafting operation is commutative 
\begin{equation}
T_{\alpha}\:\wedge\: T_{\beta}=T_{\beta}\:\wedge\: T_{\alpha}.
\end{equation}
The single--vertex rooted tree (having only the root vertex), can be thought to 
represent the neutral (or identity) element for the grafting operation. We write 
\begin{equation}
root\:\wedge\: root=root
\end{equation}
and 
\begin{equation}
T_{\alpha}\:\wedge\: root=root \:\wedge\: T_{\alpha}=T_{\alpha}
\end{equation}
The tree grafting operation induces a natural algebra on forests described below. \\
Let $\mathcal{F}=\left\{T_{k}\right\}_{1\le k \le n}$ and $\mathcal{G}=\left\{ T_{l}^{\prime}\right\} _{1\le l\le m}$
denote two forests. The grafting of the forest $\mathcal{F}$
onto the forest $\mathcal{G}$ amounts to creating a new forest made of trees resulting from the 
grafting of all possible pairs of trees $\left(T_{u},T_{v}^{\prime}\right)$ such that  
\begin{equation}
\left(T_{u},T_{v}^{\prime}\right) \in \mathcal{F}\times \mathcal{G}.
\end{equation}
Hence,
\begin{equation}
\mathcal{F}\wedge\mathcal{G}=\left\{ T_{k}\wedge T_{l}^{\prime}\right\}_{
\begin{array}{c}
1\le k\le n, 1\le l\le m.
\end{array}}
\end{equation}

\begin{figure} 
\center
\scalebox{0.6}{
\begin{tikzpicture}[level/.style={sibling distance=30mm/#1}]
\node [circle,draw] (z){$r$}
  child {node [circle,draw] (a) {${\color{blue}\boldsymbol{2}}$}}
  child {node [circle,draw] (b) {${\color{blue}\boldsymbol{3}}$}};
\end{tikzpicture} 
{\Huge $\bigwedge$}
\begin{tikzpicture}[level/.style={sibling distance=30mm/#1}]
\node [circle,draw] (z){$r$}
  child [grow = down] { node [circle,draw] (b) {${\color{red}\boldsymbol{5}}$}
        child {node [circle,draw] (c) {${\color{red}\boldsymbol{7}}$}}
        child {node [circle,draw] (d) {${\color{red}\boldsymbol{11}}$}}
        };      
\end{tikzpicture}
{\Huge =}
\begin{tikzpicture}[level/.style={sibling distance=30mm/#1}]
\node [circle,draw] (z){$r$}
  child { node [circle,draw] (a) {${\color{blue}\boldsymbol{2}}$}}
  child { node [circle,draw] (a) {${\color{blue}\boldsymbol{3}}$}}
  child {node [circle,draw] (b) {${\color{red}\boldsymbol{5}}$}
        child {node [circle,draw] (c) {${\color{red}\boldsymbol{7}}$}}
        child {node [circle,draw] (d) {${\color{red}\boldsymbol{11}}$}}
};
\end{tikzpicture}
}
\caption{Grafting operation on trees.}
\end{figure}

\subsubsection{Raising forests}

Let $\mathcal{F}=\left\{ T_{k}\right\} _{1\le k\le n}$ denote a forest
and $T_{\alpha}$ denote a tree. The operation of raising the forest 
$\mathcal{F}$ by the tree  $T_{\alpha}$ noted 
\begin{equation}
\mathrm{\mathcal{R}}\left(T_{\alpha},\mathcal{F}\right)
\end{equation}
consists in substituting every tree $T_{k}$ in the forest $\mathcal{F}$
with an extended tree $T_{k}^{\prime}$ constructed by rooting
the tree $T_{k}$ at every leaf of $T_{\alpha}$ as illustrated in Fig[5].\\
\begin{figure} 
\center
\scalebox{0.5}{
{\Huge $\mathcal{R}($}
\begin{tikzpicture}[level/.style={sibling distance=30mm/#1}]
\node [circle,draw] (z){$r$}
  child {node [circle,draw] (a) {${\color{blue}\boldsymbol{2}}$}};
\end{tikzpicture} 
{\Huge ,}
{\Huge \{}
\begin{tikzpicture}[level/.style={sibling distance=30mm/#1}]
\node [circle,draw] (z){$r$}
  child [grow = down] { node [circle,draw] (b) {${\color{red}\boldsymbol{3}}$}
        child {node [circle,draw] (c) {${\color{red}\boldsymbol{5}}$}}
        child {node [circle,draw] (d) {${\color{red}\boldsymbol{7}}$}}
        };      
\end{tikzpicture}
{\Huge ,}
\begin{tikzpicture}[level/.style={sibling distance=30mm/#1}]
\node [circle,draw] (z){$r$}
  child { node [circle,draw] (a) {${\color{green}\boldsymbol{5}}$}}
  child { node [circle,draw] (b) {${\color{green}\boldsymbol{7}}$}};
\end{tikzpicture}
{\Huge \} )} 
\newline {\Huge = } 
{\Huge \{ } 
\begin{tikzpicture}[level/.style={sibling distance=30mm/#1}]
\node [circle,draw] (z){$r$}
  child [grow = down] { node [circle,draw] (a) {${\color{blue}\boldsymbol{2}}$}
        child [grow = down] { node [circle,draw] (b) {${\color{red}\boldsymbol{3}}$}
                child {node [circle,draw] (c) {${\color{red}\boldsymbol{5}}$}}
                child {node [circle,draw] (d) {${\color{red}\boldsymbol{7}}$}}
                }
};
\end{tikzpicture}
{\Huge ,}
\begin{tikzpicture}[level/.style={sibling distance=30mm/#1}]
\node [circle,draw] (z){$r$}
  child [grow = down] {node [circle,draw] (b) {${\color{blue}\boldsymbol{2}}$}
        child {node [circle,draw] (c) {${\color{green}\boldsymbol{5}}$}}
        child {node [circle,draw] (d) {${\color{green}\boldsymbol{7}}$}}
};
\end{tikzpicture}
{\Huge \} } 
        }
\caption{Raising operation on forests.}
\end{figure}
We note that the raising operator is not commutative. 
It also follows from the definition that for any tree $T_{\alpha}$ we have
\begin{equation}
\mathrm{\mathcal{R}}\left(root,T_{\alpha}\right) = root.
\end{equation}
Finally, for convenience we adopt the convention that  
\begin{equation}
\mathrm{\mathcal{R}}\left(T_{\alpha},root \right) =  T_{\alpha}.
\end{equation}

\subsection{Generating the forest of all labeled trees of height bounded by $h$ using $n$ labels.}

We recall that a label assignment to vertices of a tree is considered valid if the root is unlabeled 
and no two sibling vertices of the tree are assigned the same label. 
We assume here that the label set is of size $n$, and furthermore, if we restrict 
our attention to the trees of height bounded by a positive integer $h$,
it is clear that the set of such trees is finite.\\ 
We now describe a construction which produces the set of trees described above as a forest. 
For this purpose we consider the following recursion.
\begin{equation}
G_{0}=\left\{ root \right\}
\end{equation}
and
\begin{equation}
G_{i}=\bigwedge_{0\le k\le n-1}\left\{ root,\:\mathrm{\mathcal{R}}\left(T_{k},G_{i-1}\right)\right\},
\end{equation}
that is,
\begin{equation}
G_{i}=\left\{ root,\:\mathrm{\mathcal{R}}\left(T_{0},G_{i-1}\right)\right\}\wedge \cdots \wedge \left\{ root,\:\mathrm{\mathcal{R}}\left(T_{n-1},G_{i-1}\right)\right\}
\end{equation}
where $T_{k}$ denotes the tree made of the root and an additional 
vertex who is assigned the $k^{th}$ label from our labeling set
$L=\left\{0, 1,\cdots,(n-1)\right\} $.\\
For illustration purposes we express $G_1$ 
\begin{equation}
G_{1}=\bigwedge_{0\le k\le n-1}\left\{ root,\: T_{k}\right\}. 
\end{equation}
\begin{equation}
=\left\{ root,\: T_{0}\right\}\wedge \cdots \wedge \left\{ root,\: T_{n-1}\right\}. 
\end{equation}
By construction we have that for $1 \le i \le h$ the trees in the forest $G_{i}$
are distinct, validly labeled and of depth bounded above by $i$. Furthermore 
\begin{equation}
G_{s} \varsubsetneq G_{s+1}
\end{equation}
It also follows from the definition of the recurrence that $G_{1}$ has $2^{n}$
elements and that the number of trees in the forest $G_{i}$ is prescribed by the recurrence relation 
\begin{equation}
\left|G_{i}\right|=\left(1 + \left |G_{i-1}\right|\right)^{n}
\end{equation}

\noindent \textbf{Theorem 1:} The forest $G_{h}$ contains all trees
with valid vertex label assignment and of height bounded by $h$.\\

\textbf{Proof :} The theorem is proved by first observing that
the validly labeled trees of height bounded by $h$ correspond 
to rooted subtrees of the complete $n$-ary tree of height $h$. 
A simple counting argument reveals that the number of such trees 
is determined by the recurrence relation 
\begin{equation}
S_0 = 1
\end{equation}
and
\begin{equation}
\forall \, 1 \le i \le h, \; S_{i} = \left(1 + S_{i-1} \right)^n 
\end{equation}
where $S_i$ denote the number of distinct rooted subtrees of the complete $n$-ary
trees with height bounded by $i$. We therefore conclude the proof by observing that 
\begin{equation}
\forall \, 0 \le i \le h, \; \left|G_{i}\right| = S_i . \, \square 
\end{equation}

\section{Applications to Number Theory}
We discuss here the structural bijection between the set of integers 
and the set of valid prime--labeled trees. 
The mapping of an arbitrary valid tree (labeled with primes) to an 
integer, is established by equating the sibling relationship between 
vertices to integer multiplication and also equating for non-root 
vertices the parenthood relation to integer exponentiation. 
The process is therefore algorithmic and will be referred to 
as evaluation of a tree to an integer. We may point out that the 
evaluation process is recursive and insist that the evaluation be 
always initiated at the leaves because of the non-associativity 
of integer exponentiation.\\
On the other hand the mapping of an arbitrary integer to a valid  
prime--labeled tree immediately follows from recursively applying the
fundamental theorem of arithmetics to the powers of the prime factors.
The single vertex tree (only having the root as a vertex) is associated 
to the integer $1$. It therefore follows that the mapping between trees 
and integers is bijective.

\subsection{Combinatorial Prime Sieve Algorithm}

Modern sieve theory focuses on providing accurate estimates for the number of primes 
in some integer interval. In contrast, as an application of the combinatorial 
framework described above we discuss a variation of the classical sieve 
of Eratosthenes.
We recall some notation convention.\\
$\left[ n \right]$ denote the set of consecutive integers from $1$ to $n$,\\
$\mathbb{P}$ denotes the set of prime numbers \\
$\mathbb{N}$ denotes the set of strictly positive integers.\\
\noindent 
\newpage
\begin{algorithm}{Combinatorial Prime Sieve}{
\label{algo:combinatorial_prime_seive}
\qinput{For $q\in \mathbb{P}$ the set $\mathbb{P} \cap \left[ q \right]$}
\qoutput{The set $\mathbb{P} \cap \left([2q]\backslash [q]\right)$}}
$G_{out} \leftarrow \left\{ \emptyset \right\}$(*Output list*)\\
$G_{rest} \leftarrow \left\{ \emptyset \right\}$(*restricted list of composites*) \\
$G_{old} \leftarrow \left\{ \emptyset \right\} $(*old extended list of integers*)\\
$G_{new}  \leftarrow \bigwedge_{0\le k\le n-1}\left\{ root, T_k \right\}$ (*new extended list of integers*)\\
\qwhile ($\exists T \in \left(G_{new} \backslash G_{old}\right) \, \,s.t.\,\, evaluation(T) \leq 2q$ ) \\
        \qdo    $G_{old} \leftarrow G_{new}$ \\
        $G_{new} \leftarrow \bigwedge_{0\le k\le n-1}\left(root,\mathcal{R}(T_k,G_{old})\right)$ \\\\
\qfor $i \in [1, \ldots ,\left|\left(G_{new} \backslash G_{old}\right)\right| ] $ \\
        \qdo \\
        \qif $\left(q< evaluation(\left(G_{new} \backslash G_{old}\right)[i]) < 2q \right)\,\, and \,\, \left(\left(G_{new} \backslash G_{old}\right)[i]\notin G_{rest}\right)$\\(*found a new composite*)\\
        \qthen $G_{rest}\centerdot orderedInsert\{\left(G_{new} \backslash G_{old}\right)[i]\}$ 
( *added the new composite to restricted the list*) \qfi \qrof \\
\qfor $i \in [1, \ldots ,|G_{rest}|-1 ]$ \\
        \qdo \\
        \qif ($evaluation(G_{rest}[i+1]) == evaluation(G_{rest}[i])+2$) (*found a new prime*)  \\
        \qthen $G_{out} \leftarrow  G_{out}\centerdot Append\{evaluation(G_{rest}[i]+1)\} $ (*add it to the list*) \qfi \qrof \\\\
\qreturn $G_{out}$ 
\end{algorithm}

\begin{theorem}
$\forall q\in \mathbb{P} $ the algorithm above will produce all the primes in the range $(q,2q)$ 
\end{theorem}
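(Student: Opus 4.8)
The plan is to split the claim into a \emph{completeness} part (every composite in the open interval $(q,2q)$ is enumerated and stored in $G_{rest}$ in increasing order) and a \emph{detection} part (scanning $G_{rest}$ for consecutive entries differing by $2$ recovers exactly the primes of $(q,2q)$). The bridge between the two is the elementary observation that, inside $(q,2q)$, the integers the tree construction can produce from the supplied labels are precisely the composites, so the primes appear as the ``holes'' left by the enumeration.

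First I would establish the number-theoretic lemma underlying completeness: every composite $m\le 2q$ has all of its prime factors $\le q$. Indeed, if a prime $p\mid m$ with $p>q$, then compositeness gives $m/p\ge 2$, whence $m\ge 2p>2q$, a contradiction; so $m$ is either prime or has all prime factors $\le q$. I would then push this through the recursive prime encoding: writing $m=\prod_i p_i^{e_i}$, each exponent satisfies $2^{e_i}\le m\le 2q$, so $e_i\le 1+\log_2 q\le q$ for $q\ge 2$, and any exponent occurring further up the encoding tower is bounded by a previous one, so every label in the tree lies in $\mathbb{P}\cap[q]$. Thus every composite $m\le 2q$ is representable using only the input labels. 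Combined with Theorem~1 and the fact that the minimal evaluation of a height-$h$ tree is the tower $2\uparrow\uparrow h$ (the all-$2$ path), which is strictly increasing in $h$, this shows the while loop enumerates every representable integer $\le 2q$ and then halts once even the shortest new tree exceeds $2q$; the membership test $q<\mathrm{evaluation}(\cdot)<2q$ together with \textsf{orderedInsert} therefore deposits exactly the composites of $(q,2q)$ into $G_{rest}$ in sorted order.

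Next I would argue detection. For the forward direction, take a prime $p\in(q,2q)$; since $q\ge 2$ we have that $p\ge 3$ is odd, so $p-1$ and $p+1$ are even and exceed $2$, hence are composite, and the only integer strictly between them is $p$ itself. Provided both lie in $(q,2q)$ they are thus \emph{consecutive} entries of $G_{rest}$ with difference $2$, and the second for loop emits $p$. For soundness, whenever $G_{rest}[i+1]=G_{rest}[i]+2$ the integer $G_{rest}[i]+1$ is strictly between two consecutive composites, hence is not composite; lying in $(q,2q)$ it is neither $1$ nor composite, so it is prime. This matches outputs to primes in both directions.

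The step I expect to be the main obstacle is the \emph{boundary} analysis in the detection argument, namely the primes $p=q+1$ and $p=2q-1$, for which one of $p-1,p+1$ falls on or outside the open interval $(q,2q)$ and so may be missing from $G_{rest}$. For $q\ge 3$ the value $q+1$ is even and never prime, which disposes of the lower end; the delicate case is $p=2q-1$, where $p+1=2q$ is a genuine even composite sitting exactly at the excluded upper endpoint. Making the statement literally correct there requires either reading the collection test inclusively (so that $2q$ enters $G_{rest}$ and pairs with $2q-2$ to reveal $2q-1$) or giving a dedicated argument for the top of the range; the small case $q=2$, whose only prime $3\in(2,4)$ needs the endpoints $2$ and $4$, must likewise be checked by hand. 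Once these endpoint cases are absorbed, completeness and soundness together yield that $G_{out}$ equals $\mathbb{P}\cap(q,2q)$.
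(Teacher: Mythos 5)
Your proposal follows essentially the same route as the paper's proof: the same two pillars appear in both, namely (i) every composite in $(q,2q)$ factors entirely over primes $\le q$ because a composite divisible by some $p>q$ is at least $2p>2q$, so the recursive construction of Section 1.2 generates all of them, and (ii) the primes are then read off as the integers sandwiched between consecutive generated composites at distance $2$. The differences are ones of care rather than strategy, and they are to your credit. You supply the exponent bound $e_i\le 1+\log_2 q\le q$ needed to confirm that \emph{every} label in the recursive tower (not just the bottom-level prime factors) lies in $\mathbb{P}\cap[q]$, a point the paper asserts implicitly; you give a termination argument via the minimal evaluation of a height-$h$ tree; and you dispense with the appeal to Bertrand's postulate, which the paper cites but which is not actually needed for the correctness statement as phrased. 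Most importantly, your boundary analysis exposes a genuine defect that the paper's proof silently skips: with the strict test $q<\mathrm{evaluation}(\cdot)<2q$, a prime of the form $2q-1$ (e.g.\ $5$ for $q=3$) and the case $q=2$ are missed because one of the flanking even composites falls on or outside the open interval, so the theorem as literally stated requires the inclusive reading or the ad hoc patches you describe. That observation is correct and is the one substantive point on which your write-up improves on the paper's.
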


\begin{proof}
The correctness of the algorithm follows from the well established fact that for all $n \ge 1$,
there exist some prime number $p$ with $n<p \le2n$ [2], in addition to two other facts. The first 
of which is that in the range $(q,2q)$ there is no composite whose corresponding tree 
has a vertex labeled with a prime greater than $q$. The reason being that $2q$ is the
largest of the composite less than or equal to $2q$, in other words the smallest composite 
admiting a prime $p$ greater than $q$  as a vertex label in the associated tree would be of 
the form $2p$ and consequently necessarily greater than $2q$. 
The second fact is that all the composites whose corresponding trees are labeled with primes 
less than or equal to $q$ are determined by the combinatorial algorithm described in section 1.2.
Furthermore, the sought--after primes in the range $(q,2q)$ will be uncovered by identifying 
trees in the resulting forest which evaluate to nearest composites whose difference equals $2$.
\end{proof}

\subsection{Re-enumerating the rationals}
Georg Cantor was the first to establish the suprising fact that the set $\mathbb{Q}^+$ is 
countably infinite. 
In \cite{N_CalK_H_Wilf} Neil Calkin and Herbert Wilf introduced a sequence which listed 
the elements of $\mathbb{Q}^+$ so as not to include duplicates in the sequence. 
We discuss here another sequence for listing the rationals so as not to include 
duplicates. Our sequence follows from the integer combinatorial encoding described above. 

\begin{figure} 
\center
\scalebox{0.5}
{
\begin{tikzpicture}
\node [circle,draw] (z){$r$}
                child {node [circle,draw] (a) {$p_k$}};
\end{tikzpicture}
}
\caption{tree $T_k$.}
\end{figure}
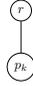

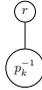
\begin{figure} 
\center
\scalebox{0.5}
{
\begin{tikzpicture}
\node [circle,draw] (z){$r$}
                child {node [circle,draw] (a) {$p^{-1}_k$}};
\end{tikzpicture}
}
\caption{tree $T_{k^{-1}}$.}
\end{figure}

The proposed construction for listing elements of $\mathbb{Q}^+$ has the benefit of providing 
explicit control over the subset of the prime numbers used to express the rationals in the 
sequence.\\  
Let $T_k$ denote the tree made of a root vertex and an additional vertex labeled with the $k^{th}$ 
prime (see Fig[6]), and let $T_{k^{-1}}$ denote the tree made of a root vertex and an additional vertex labeled with the inverse of the $k^{th}$ 
prime (see Fig[7]). Building on the recurrence for $G_i$ discussed in 1.2 we write the recurrence 
\begin{equation}
G_{0}=\left\{ root \right\}
\end{equation}
and
\begin{equation}
G_{i}=\bigwedge_{ k \in \mathbb{N}}\left\{ root,\:\mathrm{\mathcal{R}}\left(T_{k},G_{i-1}\right)\right\}. 
\end{equation}
We  use the recurrence for $G_i$  to create a new $H_{i}$ recurrence defined by
\begin{equation}
H_{i}=\bigwedge_{k\in \mathbb{N}} \left\{ \mathbf{\mathcal{R}}\left(T_{k^{-1}},\: G_{i}\right),root ,\mathbf{\mathcal{R}}\left(T_{k},G_{i} \right)\right\}.
\end{equation}
We also have 
\begin{equation}
H_{i}\varsubsetneq H_{i+1}.
\end{equation}
The trees in the forest $H_{i}$ evaluate to distinct rational numbers in reduced form and the rooted trees in the 
forest $H_{i}$ all have depth less than or equal to $i$. The trees which evaluate to non-integers 
have special vertices that are prime--inverse labeled and attached to the root. Furthermore, it
follows from the fact that siblings have different labels that corresponding rational numbers are
in their reduced form.\\
Hence as a corollary of \emph{Theorem}1 the trees in forest $H_{\infty}$ bijectively maps to $\mathbb{Q}^+$.

\section{Acknowledgments}
The authors are grateful to Professor Doron Zeilberger, Professor
Henry Cohn, Professor Mario Szegedy, Professor Vladimir Retakh
and Jules Lambert for insightful discussions and suggestions.
The authors are greatful to Jeanine Sedjro for proof reading 
the manuscript. The first author was partially supported by 
the National Science Foundation grant NSF-DGE-0549115
and Microsoft Research New England.


\begin{thebibliography}{References}
\bibitem{N_CalK_H_Wilf} N. Calkin \& H. Wilf, \newblock Recounting the rationals. \newblock {\em Amer. Math. Monthly}, 107:pp.360--363,
2000.
\bibitem{Aigner-Ziegler09} M. Aigner;G. Ziegler (2009). Proofs from THE BOOK (4th ed.). Berlin, New York: Springer-Verlag. ISBN 978-3-642-00855-9.
\bibitem{Edinah K. Gnang} Edinah K. Gnang, Experimental Number Theory, Part I : Tower Arithmetic (Preprint) arXiv:1101.3026v1 [math.NT]
\end{thebibliography}
\end{document}